\theoremstyle{plain}
\DeclareMathOperator{\weight}{weight}
\DeclareMathOperator{\Frob}{Frob}
\newtheorem{Theorem}{Theorem}
\newtheorem{Proposition}[Theorem]{Proposition}
\newtheorem{Lemma}[Theorem]{Lemma}
\theoremstyle{definition}
\newtheorem{Definition}[Theorem]{Definition}
\newtheorem{Example}[Theorem]{Example}
\newtheorem{Proposition-Definition}[Theorem]{Proposition-Definition}
\newtheorem{Remark}[Theorem]{Remark}
\newtheorem{Nota Bene}[Theorem]{Nota Bene}
\numberwithin{equation}{section}
\DeclareFontFamily{U}{russian}{}
\DeclareFontShape{U}{russian}{m}{n}
        { <5><6> wncyr5
        <7><8><9> wncyr7
        <10><10.95><12><14.4><17.28><20.74><24.88> wncyr10 }{}
\DeclareSymbolFont{Russian}{U}{russian}{m}{n}
\DeclareSymbolFontAlphabet{\mathcyr}{Russian}
\let\@math@cyr\mathcyr
\renewcommand{\mathcyr}[1]{\@math@cyr{\cyracc #1}}
\newcounter{subsubsubsection}[subsubsection]
\renewcommand\thesubsubsubsection{\thesubsubsection .\@alph\c@subsubsubsection}
\newcommand\subsubsubsection{\@startsection{subsubsubsection}{4}{\z@}%
                                     {-3.25ex\@plus -1ex \@minus -.2ex}%
                                     {1.5ex \@plus .2ex}%
                                     {\normalfont\normalsize\bfseries}}
\newcommand*\l@subsubsubsection{\@dottedtocline{3}{10.0em}{4.1em}}
\newcommand*{\subsubsubsectionmark}[1]{}
\author{Hidekazu Furusho and David Jarossay}
\title{$p$-adic multiple $L$-functions and cyclotomic multiple harmonic values}
\address{Graduate School of Mathematics, Nagoya University, Furo-cho, Chikusa-ku, Nagoya 464-8602, Japan}
\email{furusho@math.nagoya-u.ac.jp}
\address{Universit\'{e} de Gen\`{e}ve, Section de math\'{e}matiques, 2-4 rue du Li\`{e}vre,
	Case postale 64,
	1211 Gen\`{e}ve, Switzerland}
\email{david.jarossay@unige.ch}
\begin{document}

\maketitle

\begin{abstract} 
We show that the special values at tuples of positive integers of the $p$-adic multiple $L$-function introduced by the first-named author et al. can be expressed in terms of the cyclotomic multiple harmonic values introduced by the second-named author.
\end{abstract}

\setcounter{section}{-1}

\section{Introduction}\label{Introduction}
We start with the multiple zeta function in $r$ variables $(r\geqslant 1)$,
the following generalization of the Riemann zeta function which is defined by 
$\displaystyle\zeta_{r}\big((s_{i})_{r}\big):=\sum_{0<m_{1}<\cdots<m_{r}} m_{1}^{-s_{1}} \cdots m_{r}^{-s_{r}}$
for $(s_{i})_{r}:= (s_{1},\ldots,s_{r}) \in \mathbb{C}^{r}$ such that $\text{Re}(s_{r-r'+1}+\cdots+s_{r})>r'$ for all $1\leqslant r' \leqslant r$. 
Its meromorphic continuation to the whole space $\mathbb{C}^{r}$ has been discussed in several papers, among whom \cite{AET,Z}.
In this paper, for a prime $p$, we consider the $p$-adic multiple $L$-function
$L_{p,r}\big((s_{i})_{r};(\omega^{k_{i}})_{r};(1)_{r};c\big)$,
a $p$-adic function which is defined  for $\big((s_{i})_{r};(k_{i})_{r}\big) \in \mathbb{C}_{p}^{r}\times  \mathbb{Z}^{r}$ with $|s_{i}|_{p} \leqslant p^{\frac{-1}{p-1}}$ for all $1\leqslant i \leqslant r$,
where $c\geqslant 2$  is a positive integer prime to $p$, and $\omega$ is the Teichm\"{u}ller character (cf. Definition \ref{definition of Lp,r}). 
It is introduced by the first-named author et al. (\cite{FKMT}),
and serves as  a $p$-adic analogue of the above $\zeta_{r}$
and also a generalization of the Kubota-Leopoldt $p$-adic $L$-function.

Multiple zeta values are the values of multiple zeta functions at tuples of positive integers, namely $\zeta\big((n_{i})_{r}\big)$ with $n_{i}$ positive integers and $n_{r}\geqslant 2$. They have been intensively studied over the last decade, as an example of periods, and with relations to knot theory, mathematical physics and other branches of mathematics. Cyclotomic multiple zeta values are a generalization of multiple zeta values which are also periods (\cite{Deligne Goncharov}) and depend additionally on a $r$-tuple $(\epsilon_{i})_{r}$ of roots of unity. One has $p$-adic analogues of multiple zeta values in \cite{Deligne Goncharov, F, F2} and, more generally, of cyclotomic multiple zeta values when the roots of unity under consideration are of order prime to $p$ in \cite{J1, U,Y}.

Cyclotomic multiple harmonic values (cf. Definition \ref{def mhs}) are a tool for studying $p$-adic cyclotomic multiple zeta values, recently introduced by the second-named author \cite{J3}, as sequences of weighted cyclotomic version of
multiple harmonic sums  of Hoffman \cite{H} and lifts of cyclotomic version of finite multiple zeta values of Kaneko-Zagier (unpublished). Given a positive integer $c$ prime to $p$, cyclotomic multiple harmonic values are certain explicit elements $\frak{H}\big((n_{i})_{r};(\epsilon_{i})_{r}\big)\in \prod_{p\in \mathcal{P}_{c}} \mathbb{Q}_{p}(\mu_{c})$, where $\mathcal{P}_{c}$ is the set of prime numbers which do not divide $c$, the $n_{i}$'s are positive integers and the $\epsilon_{i}$'s are in the group $\mu_c$ of $c$-th roots of unity ; one says that $((n_{i})_{r};(\epsilon_{i})_{r})$ have weight $\sum_{i=1}^{r}n_{i}$ and depth $r$.
For any $p \in \mathcal{P}_{c}$, the term indexed by $p$ in $\frak{H}\big((n_{i})_{r};(\epsilon_{i})_{r}\big)$ has $p$-adic valuation $\geqslant \sum_{i=1}^{r} n_{i}$.

The values $L_{p,r}((n_{i})_{r};(\omega^{-n_{i}})_{r};(1)_{r};c)$
with $n_{1},\ldots,n_{r}$ positive integers can be expressed 
as finite $\mathbb Q$-linear combinations of
the elements in $\mathbb Q_p (\mu_{pc})$
which are 
the values
of the $p$-adic twisted multiple star polylogarithm at tuples of $pc$-th roots of unity (\cite{FKMT}, Theorem 3.41).
In this paper, we give a different presentation:

\begin{Theorem} \label{the theorem}
For any $r$-tuple $(n_{i})_{r}$ of positive integers,
the family $\bigl(p^{\sum_{i=1}^{r}n_{i}}L_{p,r}((n_{i})_{r};(\omega^{-n_{i}})_{r};(1)_{r};c)\bigr)_{p \in \mathcal{P}_{c}}$ 
is expressed by the equation (\ref{eq:final equation}) as an infinite sum of series whose terms are $\mathbb{Q}(\mu_{c})$-linear combinations of cyclotomic multiple harmonic values with depth $\leqslant r$ and weight tending to infinity,
and whose convergence holds for the topology on $\prod_{p\in \mathcal{P}_{c}} \mathbb{Q}_{p}(\mu_{c})$ of the uniform convergence with respect to $p \in \mathcal{P}_{c}$.
\end{Theorem}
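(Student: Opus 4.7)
The plan is to combine two distinct ingredients. First, I would use the existing expression recalled just before the statement, namely \cite{FKMT} Theorem 3.41, which presents $L_{p,r}((n_i)_r;(\omega^{-n_i})_r;(1)_r;c)$ as an explicit finite $\mathbb{Q}$-linear combination of values $\Li^{\star,p}_{(n_i)_r}(\xi_\alpha)$ of the $p$-adic twisted multiple star polylogarithm at tuples $\xi_\alpha$ of $pc$-th roots of unity. Second, I would expand each such polylogarithm value as an infinite series whose terms involve the cyclotomic multiple harmonic values $\frak{H}$ of \cite{J3}. Concatenating these two steps and multiplying through by $p^{\sum_i n_i}$ would produce the required expression, which should then be shown to converge uniformly in $p \in \mathcal{P}_{c}$.

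The heart of the argument is the expansion step. Writing any $pc$-th root of unity $\xi = \eta \zeta$ with $\eta \in \mu_p$ and $\zeta \in \mu_c$, I would feed the naive series expansion of $\Li^{\star,p}_{(n_i)_r}$ and split the summation index $m$ according to its residue mod $p$. Grouping the resulting terms by the $c$-part $\zeta$ of the argument, and summing over the $p$-part $\eta$ of the argument (so that characters of $\mu_p$ select out the correct residue classes), I would rewrite each group, after normalization by $p^{\sum_i n_i}$, so that it matches the definition of the cyclotomic multiple harmonic values $\frak{H}\big((m_i)_s;(\zeta_i)_s\big)$ from the introduction. This identification should yield a series with coefficients in $\mathbb{Q}(\mu_c)$ and of depth bounded by the original $r$, since the indices $(m_i)_s$ and $(\zeta_i)_s$ inherit their length from the polylog.

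For convergence, I would invoke the lower bound $v_p \geqslant \sum_{i=1}^{s} m_i$ on the $p$-adic valuation of the $p$-component of $\frak{H}\big((m_i)_s;(\zeta_i)_s\big)$, explicitly recalled in the introduction. After multiplication by $p^{\sum n_i}$, the contribution of the tail of the series of weight $\geqslant N$ is controlled by $p^{-N + \sum n_i}$, which tends to $0$ as $N \to \infty$ uniformly in $p \in \mathcal{P}_c$; this gives convergence for the topology of uniform convergence in $p$ on $\prod_{p \in \mathcal{P}_c} \mathbb{Q}_p(\mu_c)$.

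The main obstacle will be the bookkeeping in the middle step, specifically ensuring that the resulting coefficients really lie in $\mathbb{Q}(\mu_c)$ and not merely in $\mathbb{Q}(\mu_{pc})$, and that the depth does not exceed $r$. The first point requires that the sums over $\eta \in \mu_p$ collapse onto $\mathbb{Q}(\mu_c)$-rational combinations, which I expect to follow from the shape of the star polylogarithm expansion together with the orthogonality of characters of $\mu_p$. The second point is more subtle: in general, multiplying two polylog-type series would raise the depth, so one must rely on the specific form of the star polylogarithm series, whose single-sum expansion preserves depth $r$ by construction. Once these two points are pinned down, writing out equation (\ref{eq:final equation}) becomes a matter of assembling the pieces.
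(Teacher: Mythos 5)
There is a genuine gap, and it sits exactly at what you call the ``heart of the argument'': the expansion step. The $p$-adic twisted multiple star polylogarithm values appearing in \cite{FKMT}, Theorem 3.41 are \emph{not} given by a naive series at the points in question; a series of the shape $\sum \xi_1^{m_1}\cdots\xi_r^{m_r}/(m_1^{n_1}\cdots m_r^{n_r})$ diverges $p$-adically (along $m=p^k$ the terms have norm $p^{kn}\to\infty$), and the values are instead defined by rigid-analytic (overconvergent/Coleman-type) continuation. So ``feeding the naive series expansion and splitting the summation index $m$ according to its residue mod $p$'' is precisely the manipulation that is unavailable $p$-adically. Relating such polylogarithm values to multiple harmonic sums is a deep result (the content of \cite{J1,J2}), and those results apply to roots of unity of order \emph{prime to} $p$; the points of \cite{FKMT} lie in $\mu_{pc}$, and their $\mu_p$-part $\eta$ sits in the residue disk of $1$, where no such expansion is provided by the cited literature. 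Moreover, the orthogonality-over-$\mu_p$ collapse you invoke requires the full sum over $\eta\in\mu_p$ to be present in the fixed finite combination of \cite{FKMT}, which you would have to verify, and even granting it you would land on $p$-adic cyclotomic multiple zeta values at $\mu_c$, i.e.\ you would still need the (nontrivial, $p$-uniform) expansion of those in terms of $\frak{H}$ -- essentially inverting \cite{J2}. The paper explicitly avoids this route (``we give a different presentation''): it works directly with the Riemann sums of the defining integral over $D_{r,p^M}$, decomposes that domain by Euclidean division by $p$ (Lemma \ref{transformation domain of summation}), converts the remainder part $T_{r,J}$ into multiple harmonic sums $H_p$ by quasi-shuffle combinatorics on quasi-simplices (Proposition \ref{lemma on domain on T}), evaluates the quotient part by closed polynomial-exponential formulas with coefficients $\mathcal{B}_{l,\xi}$ (Lemmas \ref{mhs on U - 1} and \ref{mhs on U - 2}), and passes to the limit along $M$ with $p^{M-1}\equiv 1 \bmod c$, with a final $\Frob^{-1}$ twist to obtain $p$-independent $\mathbb{Q}(\mu_c)$-coefficients.

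Your convergence argument has a second, related gap: uniform convergence in $p$ cannot follow from $v_p(\frak{H})\geqslant \text{weight}$ alone, because the $\mathbb{Q}(\mu_c)$-coefficients in any such expansion are not $p$-integral -- in the paper they involve Bernoulli numbers and binomial denominators, and one needs the quantitative bound $v_p\bigl(\mathcal{B}_{l,\xi}\bigr)\geqslant -r\bigl\{1+\frac{\log(l_1+\cdots+l_r+r)}{\log p}\bigr\}$, proved via von Staudt--Clausen in Lemma \ref{mhs on U - 1}, to see that the valuation of the general term, $\sum_i n_i+\sum_i l_i - r\{1+\frac{\log(\sum_i l_i+r)}{\log p}\}$, still tends to $\infty$ uniformly in $p$. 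Your tail estimate ``controlled by $p^{-N+\sum n_i}$'' silently assumes integral coefficients (and, incidentally, has the factor $p^{\sum n_i}$ acting in the wrong direction -- it improves rather than worsens the bound). Without a coefficient-denominator bound of von Staudt--Clausen type, the claimed uniform convergence on $\prod_{p\in\mathcal{P}_c}\mathbb{Q}_p(\mu_c)$ is not established.
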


This theorem combined with the expression of cyclotomic multiple harmonic values in terms of $p$-adic cyclotomic multiple zeta values \cite{J2} gives an expression of $\bigl(p^{\sum_{i=1}^{r}n_{i}}L_{p,r}((n_{i})_r;(\omega^{-n_{i}})_r;(1)_r;c)\bigr)_{p \in \mathcal{P}_{c}}$ in terms of $p$-adic cyclotomic multiple zeta values, which are values of $p$-adic twisted multiple polylogarithms at tuples of $c$-th roots of unity. The sums of series of the above type are interpreted in terms of the Galois theory of $p$-adic cyclotomic multiple zeta values in \cite{J4}.

Our plan of this paper is as follows:
we recall the definitions of multiple $L$-functions $L_{p,r}$ and cyclotomic multiple harmonic values in \S \ref{Definitions} and then
we calculate the decomposition of the domain of the integration of $L_{p,r}$
in \S\ref{Decomposition of the domain of the integration}
and a variant of cyclotomic multiple harmonic sums
in \S\ref{Computation of a variant of cyclotomic multiple harmonic sums},
which are required to prove our theorem  \ref{the theorem}  in \S\ref{proof of the theorem}.

\emph{Acknowledgments.} The first-named author has been supported by JSPS KAKENHI 15KK0159 and 18H01110. The second-named author thanks Nagoya University for hospitality and has been supported by NCCR SwissMAP, Labex IRMIA, DAIKO Foundation and JSPS KAKENHI 18H01110. The authors thank the referee for useful comments.

\section{Definitions}\label{Definitions}
We review the definitions of the  main objects involved in this paper : the $p$-adic multiple $L$-functions $L_{p,r}$, cyclotomic multiple harmonic sums, and cyclotomic multiple harmonic values.

We denote by $\mathbb{N}$ the set of positive integers and by $\mathbb{N}_{0}=\mathbb{N} \cup \{0\}$. Let $p$ be a prime number.
Put $\mathcal{O}_{\mathbb{C}_{p}}$ to be the ring of integers of $\mathbb{C}_{p}$.
Let $\omega: \mathcal{O}_{\mathbb{C}_{p}}^{\times}\to  \mathcal{O}_{\mathbb{C}_{p}}^{\times}$ be the Teichmuller character,
and let $\langle x \rangle = \frac{x}{\omega(x)}$ for $x \in \mathcal{O}_{\mathbb{C}_{p}}^{\times}$.
Set 
$\int_{\mathbb{Z}_{p}} f(x) d \frak{m}_{z}(x) = \lim_{N \rightarrow \infty} \sum_{a=0}^{p^{N}-1} f(a) \frak{m}_{z}(a+p^{N}\mathbb{Z}_{p}).$ 
For $z \in \mathbb{P}^{1}(\mathbb{C}_{p})$ with $|z-1|_{p} \geqslant 1$, let $\frak{m}_{z}$ be the measure defined by
$\frak{m}_{z}(j+p^{N}\mathbb{Z}_{p})= \frac{z^{j}}{1-z^{p^{N}}}$  ($0\leqslant j \leqslant p^{N}-1$).
For $c \in \mathbb{N}$ prime to $p$, put
$\tilde{\frak{m}}_{c}:=\sum_{\substack{\xi^{c}=1\\\xi\not= 1}} \frak{m}_{\xi}$. 
Set $r\in\mathbb{N}$ and
$\frak{X}_{r}(d) := \{ (s_{1},\ldots,s_{r}) \in \mathbb{C}_{p}^{r}\text{ }|\text{ }|s_{j}|_{p} \leqslant d^{-1}p^{-\frac{1}{p-1}} \ (0\leqslant j \leqslant r)\}$.

\begin{Definition}[\cite{FKMT} Definition 1.16] \label{definition of Lp,r} 
Let $(s_i)_r:=(s_{1},\ldots,s_{r}) \in \frak{X}_{r}(q^{-1})$, $(k_i)_r:=(k_{1},\ldots,k_{r}) \in \mathbb{Z}^{r}$, $c \in \mathbb{N}_{> 1}$ which is prime to $p$, and
	\begin{equation} \label{eq: domain of integration} 
	(\mathbb{Z}_{p}^{r})' := 
	\{ (x_1,\dots, x_r) \in \mathbb{Z}_{p}^{r} \text{ }|\text{ }p \nmid x_{1},\ p \nmid (x_{1}+x_{2}),\ \ldots,\ p \nmid (x_{1}+\cdots+x_{r}) \}.
	\end{equation}
	The {\it $p$-adic multiple $L$-function} is defined by 
\begin{multline} L_{p,r} ((s_i)_r;(\omega^{k_{i}})_r;(1)_r;c)
\\ 
:= \int_{ (\mathbb{Z}_{p}^{r})'}
\langle x_{1}  \rangle^{-s_{1}} 
\langle x_{1}   + x_{2}  \rangle^{-s_{2}}
\cdots 	
\langle x_{1}  + \cdots + x_{r}  \rangle^{-s_{r}}
\omega^{k_{1}}(x_{1} )
\cdots 
\omega^{k_{r}}(x_{1} + \cdots + x_{r} )
\prod_{i=1}^{r} d\tilde{\frak{m}}_{c}(x_{j}).
\end{multline}
\end{Definition}

In \cite{FKMT} Example 1.19, it is explained that when $r=1$,
the Kubota-Leopoldt $p$-adic $L$-function is recovered: 
$$L_{p,1}(s;\omega^{k-1};\gamma_1;c)
=\langle \gamma_1\rangle^{-s} \omega^{k-1}(\gamma_1) (\langle c\rangle^{1-s}\omega^k(c)-1)\cdot L_p(s;\omega^k). 
$$

\begin{Definition} \label{def mhs}
	Let $(n_i)_r:=(n_{1},\ldots,n_{r}) \in \mathbb{N}^r$, and $(\epsilon_i)_r:=(\epsilon_{1},\ldots,\epsilon_{r})\in\mu_{c}^r$, and $m \in \mathbb{N}$.
	\newline (i) The {\it cyclotomic multiple harmonic sum} is the element in $\mathbb{Q}(\mu_{c})$
	defined by
\begin{equation*}\label{eqn:MHS}
H_{m}\left(
(n_{i})_{r};(\epsilon_{i})_{r}\right)
:= \sum_{0<m_{1}<\cdots<m_{r}<m} \frac{( \frac{\epsilon_{2}}{\epsilon_{1}})^{m_{1}} \cdots (\frac{1}{\epsilon_{r}})^{m_{r}}}{m_{1}^{n_{1}} \cdots m_{r}^{n_{r}}} \in \mathbb{Q}(\mu_{c}).
\end{equation*}
(ii) (\cite{J3}, Definition 1.3.1) Let $\mathcal{P}_{c}$ be the set of prime numbers which do not divide $c$. For $p \in \mathcal{P}_{c}$, 
we also denote by $\epsilon_{i}$ ($1\leqslant i\leqslant r$) the image of $\epsilon_{i}$ by the embedding $\mathbb{Q}(\mu_{c})\hookrightarrow \mathbb{Q}_{p}(\mu_{c})$. 
The {\it cyclotomic multiple harmonic value} is the family of multiple harmonic sums defined by
\begin{equation*}\label{eqn:MHV}
 \frak{H}\left((n_{i})_{r};(\epsilon_{i})_{r}\right) := \Bigl(  p^{n_{1}+\cdots+n_{r}}H_{p}\left((n_{i})_r;(\epsilon_{i})_r\right)\Bigr)_{p \in \mathcal{P}_{c}} \in  \prod_{p \in \mathcal{P}_{c}} \mathbb{Q}_{p}(\mu_{c}) .
 \end{equation*}
\end{Definition}

For differential forms $\eta_{1},\ldots,\eta_{n}$ in the set $\{\frac{dz}{z-z_{0}}\text{ }|\text{ }z_{0} \in \{0\}\cup \mu_{c}\}$ with $\eta_{1}\not=\frac{dz}{z}$, the formal iterated integral $I(\eta_{n},\ldots,\eta_{1}) \in K[[z]]$ is defined, by induction on $n$, by $I(\eta_{1}) = \int_{0}^{z} \eta_{1}$ and $I(\eta_{n},\ldots,\eta_{1})=\int_{0}^{z} I(\eta_{n-1},\ldots,\eta_{1})\eta_{n}$.
The cyclotomic multiple harmonic sums can be characterized in terms of iterated integrals as follows:
\begin{equation} \label{eq: MHS I 1} I(\omega_{0}^{l-1}\omega_{1}\omega_{0}^{n_{r}-1}\omega_{\epsilon_{r}}\cdots \omega_{0}^{n_{1}-1} \omega_{\epsilon_{1}}) = (-1)^{r+1} \sum_{0<m}H_{m}\left(
(n_{i})_{r} ; (\epsilon_{i})_{r} \right)\frac{z^{m}}{m^{l}}.
\end{equation}

\section{Decomposition of the domain of the integration of $L_{p,r}$}
\label{Decomposition of the domain of the integration}
We calculate the decomposition of the domain of the integration of $L_{p,r}((n_{i})_{r};(\omega^{-n_{i}})_{r};(1)_{r};c)$ which is required to prove our theorem  \ref{the theorem}  in \S\ref{proof of the theorem}.

For $i,j$ ($i<j$)  in $\mathbb{N}_0$, we put $[i,j]:=\{i,i+1,\dots ,j\}$.
By Definition \ref{definition of Lp,r},
any value of 
$L_{p,r} ((s_{i})_r;(\omega^{k_{i}})_r;(1)_r;c)$
at $(s_i)_r=(s_{1},\ldots,s_{r}) \in \frak{X}_{r}(q^{-1})$
is  given by the limit of finite sums indexed by the finite set
$$ D_{r,p^{M}} = \{(x_{i})_{r} \in [0,p^{M}-1]^{r} \ \bigm | \  p\nmid x_{1}+\cdots+x_{i}  \ (1\leqslant i \leqslant r)\} $$
\noindent with $M \in \mathbb{N}^{\ast}$ tending to $\infty$.
The goal of this section is to express $D_{r,p^{M}}$ in terms of domains of summation underlying multiple harmonic sums and variants. 
Let 
$$
\mathrm{div} : (x_{1},\dots,x_{r}) \in \mathbb{N}_0^{r} \mapsto 
\left((u_{1},\dots,u_{r});(t_{1},\dots,t_{r})\right) \in \mathbb{N}_0^{r} \times [0,p-1]^{r}
$$ be the map defined by the Euclidean division 
$x_{1}+\cdots+x_{i}=u_{i}p+t_{i}$ for all $i$ ($1\leqslant i \leqslant r$).

\begin{Lemma} \label{transformation domain of summation}
	For natural numbers $r$ and $M$,
	the above map $\mathrm{div}$ restricts to a bijection
	$$D_{r,p^{M}} \to  \amalg_{J \in E_{r}} \text{ } U_{r,p^{M},J} \times 
	T_{r,J},$$
	where $E_{r}$ is the set of triples $J=(P_{1},P_{2},P_{3})$ of subsets of $[1,r]$ such that $[1,r]=P_{1} \amalg  P_{2} \amalg P_{3}$, and, for $J=(P_{1},P_{2},P_{3}) \in E_r$, 
	\begin{align*}
	U_{r,p^{M},J} &:= \left\{(u_{1},\ldots,u_{r}) \in [0,rp^{M-1}-1]^{r}\text{ }\text{ } \left| 
	\begin{array}{l}
	i \in P_{1} \Rightarrow u_{i-1}=u_{i}
	\\ i \in P_{2} \Rightarrow u_{i-1}<u_{i}<u_{i-1}+p^{M-1}
	\\ i \in P_{3} \Rightarrow u_{i}=u_{i-1}+p^{M-1}
	\end{array} \right. \right\}, \\
	T_{r,J} &:= \left\{(t_{1},\ldots,t_{r}) \in [1,p-1]^r\text{ }\left|
	\text{ }\begin{array}{l}
	i \in P_{1} \Rightarrow t_{i-1} \leqslant t_{i}
	\\ i \in P_{3} \Rightarrow t_{i-1} > t_{i} 
	\end{array} \right. \right\}.
	\end{align*}
	Here we put $u_0=0$ and $t_0=0$.
\end{Lemma}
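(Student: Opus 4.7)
My plan is to treat $\mathrm{div}$ as an injective map on $\mathbb{N}_0^r$ because each $x_i$ can be recovered from the $(u_i,t_i)$ via the telescoping identity
$$x_i = (u_i-u_{i-1})p + (t_i-t_{i-1}), \quad 1\leqslant i\leqslant r,$$
with the convention $u_0=t_0=0$. The condition $p\nmid (x_1+\cdots+x_i)$ is equivalent to $t_i\neq 0$, so the remainders automatically land in $[1,p-1]$ as required by $T_{r,J}$. The remaining task is to identify the image of $D_{r,p^M}$ under $\mathrm{div}$ with the claimed disjoint union.

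The central step is a trichotomy on $\delta_i:=u_i-u_{i-1}$. Combining $0\leqslant x_i\leqslant p^M-1$ with $|t_i-t_{i-1}|\leqslant p-1$, I would first observe that $\delta_i\in[0,p^{M-1}]$: a negative $\delta_i$ forces $x_i\leqslant -1$, and $\delta_i\geqslant p^{M-1}+1$ forces $x_i\geqslant p^M+1$. Hence each index $i$ belongs to exactly one of the cases $\delta_i=0$, $0<\delta_i<p^{M-1}$, $\delta_i=p^{M-1}$, which uniquely defines the partition $J=(P_1,P_2,P_3)$ attached to $(x_i)_r$ and places $(u_i)_r$ inside $U_{r,p^M,J}$.

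Next I would read off the conditions on the $t_i$ by a direct case-by-case inspection: for $i\in P_1$, the inequality $x_i\geqslant 0$ reduces to $t_{i-1}\leqslant t_i$ while $x_i\leqslant p^M-1$ is automatic; for $i\in P_3$, the inequality $x_i\leqslant p^M-1$ reduces to $t_{i-1}>t_i$ while $x_i\geqslant 0$ is automatic; for $i\in P_2$, both inequalities follow automatically from $1\leqslant\delta_i\leqslant p^{M-1}-1$ and $|t_i-t_{i-1}|\leqslant p-1$. This is precisely where the constraints on $(u_i)$ and $(t_i)$ decouple, yielding the product $U_{r,p^M,J}\times T_{r,J}$. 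The converse direction is then routine: for any $((u_i),(t_i))\in U_{r,p^M,J}\times T_{r,J}$, the $x_i$ defined by the telescoping formula lie in $[0,p^M-1]$ and satisfy $p\nmid(x_1+\cdots+x_i)$.

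The only bookkeeping that remains is the bound $u_i\leqslant rp^{M-1}-1$, which I would get by induction: index $1$ cannot be in $P_3$ since that would force $t_1<t_0=0$, so $u_1\leqslant p^{M-1}-1$, and then $u_i\leqslant i p^{M-1}-1\leqslant rp^{M-1}-1$. I do not expect a serious obstacle: the one delicate point is keeping track of the trichotomy for $\delta_i$ together with the sign of $t_i-t_{i-1}$, but each subcase is an elementary inequality in $[0,p-1]$.
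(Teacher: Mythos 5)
Your argument is correct and is essentially the paper's proof in different clothing: the paper passes to the partial sums $y_i=x_1+\cdots+x_i$ and records the equivalences $y_{i-1}\leqslant y_i \Leftrightarrow (u_{i-1}<u_i)$ or $(u_{i-1}=u_i$ and $t_{i-1}\leqslant t_i)$ and $y_i<y_{i-1}+p^M \Leftrightarrow (u_i<u_{i-1}+p^{M-1})$ or $(u_i=u_{i-1}+p^{M-1}$ and $t_i<t_{i-1})$, which is exactly your trichotomy on $\delta_i=u_i-u_{i-1}$ combined with the sign analysis of $t_i-t_{i-1}$, since $x_i\geqslant 0$ and $x_i\leqslant p^M-1$ are just these two inequalities rewritten. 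Your explicit verifications of the converse direction and of the bound $u_i\leqslant rp^{M-1}-1$ merely fill in details the paper leaves implicit under ``from which our result follows.''
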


\begin{proof} 
	The map $(x_{1},\ldots,x_{r}) \mapsto (y_{1},\ldots,y_{r}) = (x_{1},x_{1}+x_{2},\ldots,x_{1}+\cdots+x_{r})$ sends the set 
	$D_{r,p^{M}}$
	bijectively to  $\{(y_{1},\ldots,y_{r}) \in \mathbb{N}^{r} \text{ }|\text{ } 0 < y_{1}\leqslant y_{2} \leqslant \ldots \leqslant y_{r}\text{ and } \forall i,\text{ }p\nmid y_{i}\text{ and } y_{i}-y_{i-1}< p^{M}\}$. If the Euclidean division of $y_{i}$ by $p$ is given by $y_{i} = pu_{i}+t_{i}$, we have the equivalences below:
\newline (a) $y_{i-1} \leqslant y_{i} \Leftrightarrow (u_{i-1}< u_{i})$ or $(u_{i-1}=u_{i}$ and $t_{i-1}\leqslant t_{i})$,
\newline (b) $y_{i} < y_{i-1}+p^{M} \Leftrightarrow$ ($u_{i}<u_{i-1}+ p^{M-1}$) or ($t_{i}< t_{i-1}$ and $u_{i}= u_{i-1}+ p^{M-1}$).
from which our result follows.
\end{proof}

Let $\Delta=\Delta_r$ be the set of {\it quasi-simplices}, where a quasi-simplex is a couple $(S,\Gamma)$ where $S=\{a_{1,1},\dots,
a_{1,l_1},a_{2,1},\dots,a_{2,l_2},\dots, a_{k,l_k}\}$ is a  (possibly
empty) subset of $\{1,\dots,r\}$, called the {\it support} of the
quasi-simplex, and $\Gamma=\emptyset$ or

\begin{equation}\label{presentation of quasi-simplex}
\Gamma=\{ (t_{a_{1,1}},\ldots,t_{a_{k,l_{k}}}) \in [1,p-1]^{S} \text{ }|\text{ } t_{a_{1,1}}=\cdots=t_{a_{1,l_1}} <t_{a_{2,1}}=\cdots=t_{a_{2,l_2}}<\cdots<t_{a_{k,1}}=\cdots= t_{a_{k,l_k}}\}
\end{equation}

We will frequently omit $(t_{a_{1,1}},\ldots,t_{a_{k,l_{k}}}) \in [1,p-1]^{S}$ in the notation. 
Let $\amalg\Delta$ be the set of couples $(S,\amalg_{i\in I} \Gamma_{i})$ where for all $i$, $(S,\Gamma_{i})$ is in $\Delta$. The quasi-simplices of support $S$ form a partition of $[1,p-1]^{S}$, thus, for an element $(S,\Gamma)$ of $\amalg\Delta$ the expression $\Gamma=\amalg_{i\in I} \Gamma_{i}$ with $(S,\Gamma_{i}) \in \Delta$ is unique.

\begin{Proposition} \label{lemma on domain on T}
Let $J=(P_{1},P_{2},P_{3})$ and $T_{r,J}$ be as in Lemma \ref{transformation domain of summation}. Let $P'_{i} = P_{i} \cup \{j \geqslant 1\text{ }|\text{ }j+1 \in P_{i}\}$, $i=1,3$. Then $(P'_{1}\cup P'_{3},T_{r,J}) \in \amalg \Delta$.
\end{Proposition}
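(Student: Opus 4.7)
The plan is to show that the constraints defining $T_{r,J}$ only involve the coordinates $t_j$ with $j \in P'_1 \cup P'_3$, and then to check that the resulting subset of $[1,p-1]^{P'_1 \cup P'_3}$ decomposes as a disjoint union of quasi-simplices of support $P'_1 \cup P'_3$, which is exactly what the statement $(P'_1 \cup P'_3, T_{r,J}) \in \amalg\Delta$ asks.

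First I would unpack $P'_1 \cup P'_3$. Each defining condition of $T_{r,J}$ is indexed by some $i \in P_1 \cup P_3$ and relates $t_{i-1}$ to $t_i$; the set of indices $j \geq 1$ occurring among such pairs $\{i-1, i\}$ is precisely $P'_1 \cup P'_3$ by the very definition of $P'_i$. So the coordinates $t_j$ with $j \in [1,r] \setminus (P'_1 \cup P'_3)$ are unconstrained in $[1,p-1]$ and the problem reduces to studying the projection of $T_{r,J}$ to $[1,p-1]^{P'_1 \cup P'_3}$. The boundary index $0$ has to be treated separately using $t_0 = 0$: if $1 \in P_1$ the constraint $t_0 \leq t_1$ is automatic, while if $1 \in P_3$ it forces $0 > t_1$, incompatible with $t_1 \geq 1$, so $T_{r,J} = \emptyset$ is the empty element of $\amalg\Delta$ and the conclusion is trivial.

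Next, $[1,p-1]^{P'_1 \cup P'_3}$ admits a canonical partition into quasi-simplices of support $P'_1 \cup P'_3$: to each tuple one associates the unique $\Gamma$ of the form \eqref{presentation of quasi-simplex} determined by its equality pattern together with the strict ordering of its distinct values. I would then observe that on any such $\Gamma$, every constraint $t_{i-1} \leq t_i$ or $t_{i-1} > t_i$ (with $i-1, i \in P'_1 \cup P'_3$) holds either throughout $\Gamma$ or nowhere on it, because within a block one has $t_{i-1} = t_i$, and in two different blocks the sign of $t_i - t_{i-1}$ is fixed by their strict order in $\Gamma$. Consequently $T_{r,J}$ is exactly the disjoint union of those $\Gamma$ that satisfy all the constraints, which is the desired presentation in $\amalg\Delta$. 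The main difficulty is purely bookkeeping --- matching the operation $P_i \mapsto P'_i$ to the set of coordinates genuinely constrained and handling the boundary index $0$ via $t_0 = 0$; the substantive content is just the simple observation that each inequality between two coordinates is constant on each quasi-simplex of the natural partition.
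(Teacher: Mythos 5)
Your proof is correct, but it takes a genuinely different route from the paper's. You argue softly: the defining constraints of $T_{r,J}$ only couple coordinates indexed by $P'_{1}\cup P'_{3}$ (with the boundary index $0$ dispatched via $t_{0}=0$ --- your observation that $1\in P_{3}$ forces $T_{r,J}=\emptyset$ is a point the paper sidesteps only later, by the convention $1\in P_{2}$ in Lemma \ref{mhs on U - 2}), and then you invoke the canonical partition of $[1,p-1]^{S}$ by order type --- a fact the paper itself records just before the Proposition --- noting that any constraint $t_{i-1}\leqslant t_{i}$ or $t_{i-1}>t_{i}$ is constant on each block, so $T_{r,J}$ is a union of blocks, hence lies in $\amalg\Delta$. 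The paper instead proves the statement constructively: it introduces the quasi-shuffle product $\ast$, its variant $\tilde{\ast}$, the inclusion $\phi:\tilde{\Delta}\hookrightarrow\amalg\Delta$, and the canonical increasing connected partition of $P'_{1}\cup P'_{3}$, and assembles $T_{r,J}$ from elementary chains $[i,j]_{1}$ and $[i,j]_{3}$ into the explicit formula \eqref{eq:Proposition 5 explicit}. What the paper's approach buys is precisely that explicit formula, which is not a mere proof device: the set $\Delta(T_{r,J})$ used in \eqref{eq:end lemma T} and in the final equation \eqref{eq:final equation} is defined as the collection of quasi-simplices appearing in \eqref{eq:Proposition 5 explicit}. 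Your argument still suffices even for that downstream use, because the paper notes that the decomposition of an element of $\amalg\Delta$ into quasi-simplices is unique, so your abstractly obtained decomposition necessarily coincides with the explicit one; what you lose is only the concrete combinatorial description of $\Delta(T_{r,J})$ in terms of shuffles of ascending weak chains and descending strict chains. What your approach buys in exchange is brevity and generality: it shows with no bookkeeping that \emph{any} subset of $[1,p-1]^{S}$ cut out by a Boolean combination of pairwise order conditions among coordinates of $S$ belongs to $\amalg\Delta$.
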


\begin{proof} Let the {\it quasi-shuffle product} $\ast : \Delta \times \Delta \rightarrow \amalg \Delta$ (a.k.a, harmonic, stuffle product, \cite{H})
	be
	$(S_{1},\Gamma_1)\ast (S_{2},\Gamma_2) = (S_{1}\cup S_{2},(\Gamma_1 \times [1,p-1]^{S_{2}\setminus S_{1}}) \cap  (\Gamma_2 \times [1,p-1]^{S_{1}\setminus S_{2}}))$.
	For instance,$$
	(\{1\}, \{t_1\}) \ast (\{2\},\{t_2\})
	=(\{1,2\}, \big( \{t_1<t_2\}\amalg
	\{t_2<t_1\} \\ \amalg
	\{t_1=t_2\} \big)) $$
	Let the following variant $\tilde{\ast}$  of the quasi-shuffle product : denoting by $(S,\Gamma) = (S_{1},\Gamma_1)\ast (S_{2},\Gamma_2)$, we let
	$(S_{1},\Gamma_1)\tilde{\ast} (S_{2},\Gamma_2) :=
	(S,\Gamma \setminus (\Gamma_1<\Gamma_2))$, where  $\Gamma_1<\Gamma_2$ is the quasi-simplex obtained by combining the ``rightmost end`'' of $\Gamma_1$ with the ``leftmost end'' of $\Gamma_2$ in the presentation of \eqref{presentation of quasi-simplex}
	by the edge labeled by $<$. Namely,
	\begin{multline*}
	\{t_{a_{1,1}} = \cdots <\cdots< \cdots = t_{a_{k,l_{k}}}\} <
	\{t_{b_{1,1}} = \cdots <\cdots< \cdots = t_{b_{k',l_{k'}}}\}  
	\\ = 
	\{t_{a_{1,1}}= \cdots <\cdots< \cdots =t_{a_{k,l_{k}}}<t_{b_{1,1}}= \cdots <\cdots< \cdots = t_{b_{k',l_{k'}}}\}.
	\end{multline*}
	The products $\ast$ and $\tilde{\ast}$ extend to elements of $\amalg \Delta$. 	\newline\indent Let $\tilde{\Delta}=\tilde{\Delta}_r$ be the set defined like $\Delta$ except that the inequalities $t_{i}<t_{j}$ are replaced by the inequalities $t_{i} \leqslant t_{j}$.
	By following the rule $\{t_{i} \leqslant t_{j}\} = \{t_{i} = t_{j}\} \amalg \{t_{i} < t_{j}\} $,
	we have a natural inclusion
	$\phi : \tilde{\Delta} \hookrightarrow \amalg\Delta.$ 
	For instance, $\phi((\{1,2,3\}\times\{t_2\leqslant t_1=t_3\}))=(\{1,2,3\},\{t_2<t_1=t_3\} \amalg \{t_2=t_1=t_3\})$.
\newline\indent The {\it canonical increasing connected partition} of a subset $P \subset \{1,\ldots,r\}$ means the unique expression of $P$ as the disjoint union of the maximal (for the inclusion) segments:
$P=[i_{1},j_{1}]\amalg \cdots \amalg [i_{r'},j_{r'}]$ such that  $i_{1}\leqslant j_{1}<i_{2}\leqslant j_{2}< \cdots < i_{r'}\leqslant j_{r'}$ and $i_{s}-j_{s-1}\geqslant 2$ for all $s$ ($2\leqslant s \leqslant r'$). 
For each subset $[i,j]=\{i,i+1,\dots,j\}\subset\{1,\dots,r\}$, we define 
	$[i,j]_{1}:=([i,j],\{
	t_{i-1}\leqslant t_i\leqslant \cdots \leqslant t_j\})\in
	\tilde{\Delta}_r$
	and
	$[i,j]_{3}:=([i,j],\{ t_{j} < \cdots < t_{i} < t_{i-1} \})\in\Delta_r$. Let $C_{1}(P'_{1}\cup P'_{3})\amalg \cdots\amalg C_{u}(P'_{1}\cup P'_{3})$
be the canonical increasing connected partition of $P'_{1}\cup P'_{3}$.
For each $l$ ($1\leqslant l\leqslant u$), let
$$ [i'_{l,1},j'_{l,1}]\amalg \cdots \amalg [i'_{l,r'_{l}},j'_{l,r'_{l}}]
\quad \text{ and }\quad
[i_{l,1}'',j_{l,1}''] \amalg \cdots\amalg [i_{l,r_{l}''}'',j_{l,r_{l}''}''] $$
be the increasing connected partitions of $P'_{1} \cap C_{l}(P'_{1}\cup P'_{3})$ and
$P'_{3}\cap C_{l}(P'_{1}\cup P'_{3})$ respectively.
Then we have
\begin{equation} \label{eq:Proposition 5 explicit} (P'_{1}\cup P'_{3},T_{r,J}) =
\underset{1 \leqslant l \leqslant u}{\ast}
\left( \phi([i'_{l,1},j'_{l,1}]_{1})  \tilde{\ast} \cdots \tilde{\ast} \phi([i'_{l,r_{l}'},j'_{l,r_{l}'}]_{1}) \right) \text{ }
\ast \text{ } 
\left( [i_{l,r_{l}''}'',j_{l,r_{l}''}'']_{3} \tilde{\ast} \cdots \tilde{\ast} [i_{l,1}'',j_{l,1}'']_{3}
\right). 
\end{equation}
\end{proof}

\section{Computation of a variant of cyclotomic multiple harmonic sums}
\label{Computation of a variant of cyclotomic multiple harmonic sums}
We introduce and investigate a variant of cyclotomic multiple harmonic sums
which is necessary to prove the theorem \ref{the theorem}.

Let $c \in \mathbb{Z}_{p}^{\times}\cap\mathbb{N}$, $c\geqslant 2$. 
Put $(l_{i})_{r}=(l_{1},\ldots,l_{r})\in \mathbb{N}_{0}^{r}$
and $(\epsilon_i)_r=(\epsilon_{1},\ldots,\epsilon_{r})\in\mu_c^r$. 
Take $h \in \mathbb{N}$, and $(\kappa_{i})_{r-1}=(\kappa_{1},\dots,\kappa_{r-1})\in \mathbb{N}_{0}^{r-1}$. 
Let 
\begin{equation} \label{eq:variant 2 with gaps}
\mathcal{S}_{(\kappa_{i})_{r-1},h}((l_{i})_{r};(\epsilon_{i})_{r})
= \sum_{\substack{(u_{1},\ldots,u_{r}) \in \mathbb{N}^r_{0} \\ u_{1} < h
		\\ \forall i\geqslant 2,\text{ }u_{i-1}+\kappa_{i-1} h<u_{i}<u_{i-1}+(\kappa_{i-1}+1)h}}
 \big( \frac{\epsilon_{2}}{\epsilon_{1}}\big)^{u_{1}} \cdots \big(\frac{1}{\epsilon_{r}}\big)^{u_{r}} 
 u_{1}^{l_{1}} \cdots u_{r}^{l_{r}}\in \mathbb{Q}(\mu_{c}).
\end{equation}

The next lemma characterizes the dependence on $g$ of such functions.

\begin{Lemma} \label{mhs on U - 1}
For any $(l_{i})_{r} \in \mathbb{N}_{0}^{r}$, $(\epsilon_{i})_{r}\in \mu_{c}^{r}$, and $(\kappa_{i})_{r-1} \in \mathbb{N}_{0}^{r-1}$, there exists 
an element $\mathcal{B}_{l,\xi}^{((l_{i})_{r},(\epsilon_{i})_{r},(\kappa_{i})_{r-1})} \in \mathbb{Q}(\mu_{c})$ for each $l \in [0,l_{1}+\cdots l_{r}+r]$ and $\xi \in \mu_{c}$, such that, for all $h \in \mathbb{N}$, we have
\begin{equation}\label{eq: Hgap and B}
\mathcal{S}_{(\kappa_{i})_{r-1},h}((l_{i})_{r};(\epsilon_{i})_{r}) = \sum_{\substack{0 \leqslant l \leqslant  l_{1}+\cdots+l_{r}+r
		\\ \xi \in \mu_{c}}}\mathcal{B}_{l,\xi}^{((l_{i})_{r},(\epsilon_{i})_{r},(\kappa_{i})_{r-1})} h^{l} \xi^{h} .
	 \end{equation}	
	Moreover, we have $\displaystyle v_{p} \big( \mathcal{B}_{l,\xi}^{((l_{i})_{r},(\epsilon_{i})_{r},(\kappa_{i})_{r-1})}) \geqslant - r \{ 1+  \frac{ \log(l_{1}+\cdots+l_{r}+r)}{\log(p)}\}$.
\end{Lemma}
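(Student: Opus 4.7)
The plan is to reduce the $r$-fold sum to a product of one-dimensional exponential sums via a telescoping change of variables, and then to invoke standard closed-form summation identities.

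First I would make the change of variables $v_{1} = u_{1}$ and $v_{i} = u_{i} - u_{i-1} - \kappa_{i-1}h$ for $2 \leqslant i \leqslant r$. Writing $K_{i} = \kappa_{1} + \cdots + \kappa_{i-1}$ with the convention $K_{1}=0$, this gives a bijection from the summation index set of \eqref{eq:variant 2 with gaps} onto $[0,h-1] \times [1,h-1]^{r-1}$, with inverse $u_{i} = v_{1} + \cdots + v_{i} + K_{i}h$. Setting $\zeta_{i} := \epsilon_{i+1}/\epsilon_{i}$ for $i<r$ and $\zeta_{r}:=1/\epsilon_{r}$, the telescoping identity $\zeta_{j}\zeta_{j+1}\cdots\zeta_{r} = 1/\epsilon_{j}$ yields
$$\prod_{i=1}^{r}\zeta_{i}^{u_{i}} \;=\; \xi_{0}^{\,h}\prod_{j=1}^{r}\epsilon_{j}^{-v_{j}}, \qquad \xi_{0} \;:=\; \prod_{i=2}^{r}\zeta_{i}^{K_{i}} \in \mu_{c},$$
so the character factor splits completely between the $v_{j}$'s and a pure $\xi_{0}^{h}$ prefactor.

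Next I would expand $\prod_{i=1}^{r}(v_{1}+\cdots+v_{i}+K_{i}h)^{l_{i}}$ by the multinomial theorem as a $\mathbb{Z}$-linear combination of monomials $h^{a}\,v_{1}^{c_{1}}\cdots v_{r}^{c_{r}}$ with $a + c_{1}+\cdots+c_{r} = l_{1}+\cdots+l_{r}$. Substituting into \eqref{eq:variant 2 with gaps} and interchanging sum and product reduces $\mathcal{S}_{(\kappa_{i})_{r-1},h}((l_{i})_{r};(\epsilon_{i})_{r})$ to a finite $\mathbb{Z}$-linear combination of expressions of the form $\xi_{0}^{h} h^{a}\prod_{j=1}^{r} T_{c_{j}}(\epsilon_{j}^{-1},h)$, where $T_{c}(\alpha,h) := \sum_{v=0}^{h-1} \alpha^{v}v^{c}$ (the contribution of $v=0$ being harmless: it vanishes for $c\geqslant 1$, and for $c=0$, $j \geqslant 2$ its subtraction only modifies a constant). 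A routine resolution of the difference equation $F(h+1) - F(h) = h^{c}\alpha^{h}$ shows that $T_{c}(\alpha,h) = P_{c,\alpha}(h) + Q_{c,\alpha}(h)\alpha^{h}$ with $P_{c,\alpha}, Q_{c,\alpha} \in \mathbb{Q}(\mu_{c})[h]$ of degree $\leqslant c+1$: when $\alpha = 1$ this is Faulhaber's polynomial of degree $c+1$ in $h$, and when $\alpha \neq 1$ it is a constant plus $\alpha^{h}$ times a polynomial of degree $c$. Distributing the product, multiplying by $\xi_{0}^{h}h^{a}$, and regrouping by the resulting root of unity $\xi\in\mu_{c}$ yields the expression \eqref{eq: Hgap and B} with $l \leqslant a + \sum_{j}(c_{j}+1) \leqslant l_{1}+\cdots+l_{r}+r$.

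Finally, for the valuation bound I would bound the coefficients of $P_{c,\alpha}$ and $Q_{c,\alpha}$. For $\alpha = \epsilon_{j}^{-1} \neq 1$, an induction on $c$ shows these coefficients lie in $\mathbb{Z}_{p}[\mu_{c}][(1-\alpha)^{-1}]$; since $\alpha \in \mu_{c}$ and $p \nmid c$, the element $1-\alpha$ is a $p$-adic unit, so these factors contribute no loss. For $\alpha = 1$, the coefficient of $h^{c+1-k}$ in Faulhaber's polynomial equals, up to an integer binomial factor, $B_{k}/k$: von Staudt--Clausen gives $v_{p}(B_{k}) \geqslant -1$, while $v_{p}(1/k) \geqslant -\log(c+1)/\log(p)$, so each such factor contributes at most $-(1 + \log(l_{1}+\cdots+l_{r}+r)/\log(p))$ to the $p$-adic valuation. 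With at most $r$ one-variable factors, the total loss is bounded by $r(1 + \log(l_{1}+\cdots+l_{r}+r)/\log(p))$, which is exactly the claimed estimate. The main technical obstacle I anticipate is precisely this Bernoulli-denominator bookkeeping; the telescoping change of variables and the multinomial expansion in the first two steps are essentially routine.
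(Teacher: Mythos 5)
Your proof is correct, and it takes a genuinely different route from the paper's. The paper argues by induction on the depth $r$: it peels off the innermost variable, writes its range as the difference of initial segments $[0,u_{r}+(\kappa_{r}+1)h-1]-[0,u_{r}+\kappa_{r}h-1]-\{u_{r}+\kappa_{r}h\}$, applies the one-variable case, and re-expands the powers of $u_{r}+\kappa_{r}h$ binomially to obtain an induction formula (the paper's equation \eqref{eq: lemma3}) expressing $\mathcal{S}$ at depth $r+1$ through depth-$r$ sums with shifted exponents $l_{r}+\tilde{l}$ and twisted roots of unity; the valuation estimate then accumulates a loss of $1+\frac{\log(l_{r+1}+1)}{\log p}$ per induction step. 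You instead decouple the variables at the outset: the substitution $v_{i}=u_{i}-u_{i-1}-\kappa_{i-1}h$ maps the summation domain of \eqref{eq:variant 2 with gaps} bijectively onto the product $[0,h-1]\times[1,h-1]^{r-1}$, the telescoping character identity splits the exponential weight as $\xi_{0}^{h}\prod_{j}\epsilon_{j}^{-v_{j}}$, and after a multinomial expansion (with integer coefficients, since $\kappa_{i}\in\mathbb{N}_{0}$, hence no valuation loss there) the whole sum factors into at most $r$ one-dimensional sums $T_{c}(\alpha,h)$. Your treatment of these one-variable blocks coincides with the paper's $r=1$ case --- Faulhaber plus von Staudt--Clausen for $\alpha=1$, and coefficients in $\mathbb{Z}[\mu_{c}][(1-\alpha)^{-1}]$ with $1-\alpha$ a $p$-adic unit (as $p\nmid c$) for $\alpha\neq 1$ --- so the two proofs share their kernel but differ structurally above it. What your route buys: the degree bound $l\leqslant a+\sum_{j}(c_{j}+1)\leqslant l_{1}+\cdots+l_{r}+r$ and the valuation bound $-r\bigl\{1+\frac{\log(l_{1}+\cdots+l_{r}+r)}{\log p}\bigr\}$ come out in one stroke, with the loss visibly confined to at most $r$ Bernoulli-denominator factors, each of size at most $1+\frac{\log(c_{j}+1)}{\log p}$ with $c_{j}+1\leqslant l_{1}+\cdots+l_{r}+r$ (and your identity $\frac{1}{c+1}\binom{c+1}{c+1-k}=\frac{1}{k}\binom{c}{c+1-k}$, justifying the $B_{k}/k$ form, is correct); moreover the coefficients $\mathcal{B}_{l,\xi}$ become essentially explicit. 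The paper's recursion is less transparent on valuations but packages the computation as a reusable formula. Since the remainder of the paper invokes only the existence of the $\mathcal{B}_{l,\xi}$ and the valuation bound, your argument is a valid drop-in replacement.
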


\begin{proof}
We prove the existence of the numbers $\mathcal{B}_{l,\xi}$ by induction on $r$.

Let us prove the claim for $r=1$. We have $\mathcal{S}_{\emptyset,h}(l_{1};\epsilon_{1}) = \sum\limits_{u_{1}=0}^{h-1} \epsilon_{1}^{-u_{1}} u_{1}^{l_{1}}$.  
\begin{itemize}
\item
If $\epsilon_{1}=1$, we write $\sum\limits_{u_{1}=0}^{h-1} u_{1}^{l_{1}} = \sum\limits_{l=1}^{l_{1}+1} \frac{1}{l_{1}+1} {l_{1}+1 \choose l} B_{l_{1}+1-l} h^{l}$ where $B$ denotes the Bernoulli numbers. 
This defines the coefficients
$\mathcal{B}_{l,\xi}^{(l_{1};1;\emptyset)}=\frac{1}{l_{1}+1} {l_{1}+1 \choose l} B_{l_{1}+1-l}$ if $\xi=1$ and $1 \leqslant l \leqslant l_{1}+1$ and 
$\mathcal{B}_{l,\xi}^{(l_{1};1;\emptyset)}=0$ otherwise.
By von Staudt-Clausen's theorem, for all $l_{1}$, we have $v_{p}(B_{l_{1}+1-l})\geqslant -1$ and $p^{v_{p}(l_{1}+1)} \leqslant l_{1}+1$ thus $v_{p}(\frac{1}{l_{1}+1}) \geqslant -\frac{\log(l_{1}+1)}{\log(p)}$.
Whence the desired bounds on $v_{p}(\mathcal{B}_{l,\xi}^{(l_{1};1;\emptyset)})$ follows.
\item
If $\epsilon_{1} \not=1$, let $\mathcal{E}$ be a formal variable.
We have $\sum\limits_{u_{1}=0}^{h-1} u_{1}^{l_{1}} \mathcal{E}^{u_{1}}= (\mathcal{E}\frac{d}{d\mathcal{E}})^{l_{1}} \left(\sum\limits_{u_{1}=0}^{h-1} \mathcal{E}^{u_{1}}\right) =  (\mathcal{E}\frac{d}{d\mathcal{E}})^{l_{1}}  (\frac{1-\mathcal{E}^{h}}{1-\mathcal{E}})$.
Consider a two-variable polynomial $R_{l_{1}}(x,y)$ with coefficients in $\mathbb{Z}[\mathcal{E}^{\pm 1}
]$ 
inductively constructed by 
$R_{l_1+1}(x,y)=(1-\mathcal{E})^{2^{l_1}}xy\frac{d}{dy}R_{l_1}(x,y)+2^{l_1}\mathcal{E}(1-\mathcal{E})^{2^{l_1}-1}R_{l_1}(x,y)$
and $R_0(x,y)=1-y$.
This fulfills $(\mathcal{E}\frac{d}{d\mathcal{E}})^{l_{1}}  (\frac{1-\mathcal{E}^{h}}{1-\mathcal{E}}) = \frac{R_{l_{1}}(h,\mathcal{E}^{h})}{(1-\mathcal{E})^{2^{l_{1}}}}$ for all $h \in \mathbb{N}$.
Since our $\mathcal{S}_{\emptyset,h}(l_{1};\epsilon_{1})$ is obtained 
by substituting $\frac{1}{\epsilon_{1}}$ to $\mathcal{E}$ there, 
it gets that $\mathcal{S}_{\emptyset,h}(l_{1};\epsilon_{1})$ is given by the finite linear combination of $h^l(\frac{1}{\epsilon_1^k})^h$  ($l,k\geqslant 0$) with coefficients in
$\mathbb{Z}[\epsilon_{1},\frac{1}{\epsilon_{1}},\frac{1}{\epsilon_{1}-1}]$.
By making an adjustment on $\frac{1}{\epsilon_1^k}$,
we get the presentation \eqref{eq: Hgap and B}.
It is easy to see
the coefficients $\mathcal{B}_{l,\xi}^{(l_{1};\epsilon_{1};\emptyset)}$ are all  in $\mathbb{Z}[\epsilon_{1},\frac{1}{\epsilon_{1}},\frac{1}{\epsilon_{1}-1}]$, which implies that their $p$-adic valuation is $\geqslant 0$ because of $|\epsilon_{1}|_{p} = |\epsilon_{1}-1|_{p}=1$.
\end{itemize}

Assume that our claim holds for $r$, and let us prove it for $r+1$. For any  $(l_{i})_{r+1} \in \mathbb{N}_{0}^{r+1}$, $(\epsilon_{i})_{r+1}\in \mu_{c}^{r+1}$, and $(\kappa_{i})_{r} \in \mathbb{N}_{0}^{r}$, we write
\begin{multline} \label{eq: lemma1} 
\mathcal{S}_{(\kappa_{i})_{r},h}((l_{i})_{r+1};(\epsilon_{i})_{r+1}) =
\\\sum_{\substack{(u_{1},\ldots,u_{r}) \in \mathbb{N}_{0}^r \\ u_{1} \leqslant \kappa_{1}h
\\ r\geqslant \forall i\geqslant 2,\text{ }u_{i-1}+\kappa_{i} h<u_{i}<u_{i-1}+(\kappa_{i}+1)h}}  \prod_{i=1}^{r} \big(\frac{\epsilon_{i+1}}{\epsilon_{i}}\big)^{u_{i}} u_{i}^{l_{i}} \sum_{u_{r}+\kappa_{r}h<u_{r+1}<u_{r}+\kappa_{r}h}  \big(\frac{1}{\epsilon_{r+1}}\big)^{u_{r+1}} u_{r+1}^{l_{r+1}}. 
\end{multline}
We have
$[u_{r}+\kappa_{r}h+1,u_{r}+(\kappa_{r}+1)h-1] = [0,u_{r}+(\kappa_{r}+1)h-1] - [0,u_{r}+\kappa_{r}h-1] - \{u_{r}+\kappa_{r}h\}$. Thus, by the result for $r=1$,
\begin{multline} \label{eq: lemma2}
\sum_{u_{r}+\kappa_{r}h<u_{r+1}<u_{r}+(\kappa_{r}+1)h}  \big(\frac{1}{\epsilon_{r+1}}\big)^{u_{r+1}} u_{r+1}^{l_{r+1}} =
\\ \sum_{\substack{0 \leqslant l \leqslant l_{r+1}+1 \\ \xi \in \mu_{c}}} \mathcal{B}_{l,\xi}^{(l_{r+1};\epsilon_{r+1};\emptyset)}\Big\{ \left(u_{r}+(\kappa_{r}+1\right)h)^{l} \xi^{(u_{r}+(\kappa_{r}+1)h)} -
	(u_{r}+\kappa_{r}h)^{l} \xi^{(u_{r}+\kappa_{r}h)} \Big\}\\ -\big(\frac{1}{\epsilon_{r+1}}\big)^{u_{r}+\kappa_{r}h}(u_{r}+\kappa_{r}h)^{l_{r+1}}. 
\end{multline}
By expanding
$(u_{r}+\left(\kappa_{r}+1)h\right)^{l}=\sum\limits_{\tilde{l}=0}^{l} {l \choose \tilde{l}} u_{r}^{\tilde{l}} (\kappa_{r}+1)^{l-\tilde{l}}h^{l-\tilde{l}}$, \
$(u_{r}+\kappa_{r}h)^{l}=\sum\limits_{\tilde{l}=0}^{l} {l \choose \tilde{l}} u_{r}^{\tilde{l}} \kappa_{r}^{l-\tilde{l}}h^{l-\tilde{l}}$, and
$(u_{r}+\kappa_{r}h)^{l_{r+1}}=\sum\limits_{\tilde{l}=0}^{l_{r+1}}{l_{r+1} \choose \tilde{l}}u_{r}^{\tilde{l}} (\kappa_{r}h)^{l_{r+1}-\tilde{l}}$ in \eqref{eq: lemma2},
we transform \eqref{eq: lemma1} to the following induction formula
    \begin{multline} \label{eq: lemma3} 
\mathcal{S}_{(\kappa_{i})_{r},h}((l_{i})_{r+1},(\epsilon_{i})_{r+1}) = \\ 
\sum_{\substack{0 \leqslant l \leqslant l_{r+1}+1 \\ \xi \in \mu_{c}\\ 0 \leqslant \tilde{l} \leqslant l}} 
	\mathcal{B}_{l,\xi}^{(l_{r+1};\epsilon_{r+1};\emptyset)}  {l \choose \tilde{l}}  h^{l-\tilde{l}} \Big\{
	(\kappa_{r}+1)^{l-\tilde{l}} \xi^{(\kappa_{r}+1)h}  - \kappa_{r}^{l-\tilde{l}} \xi^{\kappa_{r}h} \Big\} \mathcal{S}_{(\kappa_{i})_{r-1},h}((l_{i})_{r-1},l_{r}+\tilde{l};(\xi^{-1}\epsilon_{r}^{-1}\epsilon_{i})_{r})
	\\ - \sum_{\tilde{l}=0}^{l_{r+1}} {l_{r+1} \choose \tilde{l}} (\kappa_{r}h)^{l_{r+1}-\tilde{l}} \epsilon_{r+1}^{-\kappa_{r}h}  \mathcal{S}_{(\kappa_{i})_{r-1},h}((l_{i})_{r-1},l_{r}+\tilde{l};(\epsilon_{i})_{r}).
	\end{multline}
    The definition of the coefficients $\mathcal{B}_{l,\xi}^{((l_{i})_{r+1},(\epsilon_{i})_{r+1},(\kappa_{i})_{r})}$ are deduced from (\ref{eq: lemma3}) by applying the induction hypothesis to $\mathcal{S}_{(\kappa'_i)_{r-1},h}(l_{1},\ldots,l_{r-1},l_{r}+\tilde{l},\epsilon_{1},\ldots,\epsilon_{r-1},\epsilon_{r}\xi)$ and $\mathcal{S}_{(\kappa'_i)_{r-1},h}(l_{1},\ldots,l_{r-1},l_{r}+\tilde{l},\epsilon_{1},\ldots,\epsilon_{r-1},\epsilon_{r}\epsilon_{r+1})$.
    The bounds on their valuations is immediate by our induction hypotheses:
    \begin{align*}
    v_p\left(\mathcal{B}_{l,\xi}^{((l_{i})_{r+1},(\epsilon_{i})_{r+1}),(\kappa_{i})_{r}}) \right) &\geqslant
    -\left\{1+\frac{\log(l_{r+1}+1)}{\log p}\right\}
    -r\left\{1+\frac{ \log(l_{1}+\cdots+l_{r}+r)}{\log(p)}\right\} \\
    & \geqslant
    - (r+1) \left\{1+\frac{ \log(l_{1}+\cdots+l_{r+1}+r+1)}{\log(p)}\right\}.
    \end{align*}
\end{proof}

The next lemma expresses certain sums over the set $U_{r,p^{M},J}$ appearing in Lemma \ref{transformation domain of summation} in terms of the functions of (\ref{eq:variant 2 with gaps}).

\begin{Lemma} \label{mhs on U - 2}Let $J=(P_{1},P_{2},P_{3})$ be as in Lemma \ref{transformation domain of summation} (we take the convention that $1 \in P_{2}$).
For any $(l_{i})_{r} \in \mathbb{N}_{0}^{r}$, $(\epsilon_{i})_{r} \in \mu_{c}^{r}$, we have 
\begin{multline} \sum_{(u_{i})_{r} \in U_{r,p^{M},J}} \bigg(\frac{ \epsilon_{2}}{\epsilon_{1}}\bigg)^{u_{1}}\cdots \bigg(\frac{1}{\epsilon_{r}}\bigg)^{u_{r}} u_{1}^{l_{1}}\cdots u_{r}^{l_{r}} =
\\ 
\sum_{\substack{0\leqslant \tilde{l}_{j}\leqslant l^{(P_{2,3})}_{j} \\(j \in P_{3})}} \Big\{\prod_{j\in P_{3}}  {l^{(P_{2,3})}_{j} \choose \tilde{l}_{j}} \bigg(\frac{\epsilon_{j}^{(P_{2,3})}}{\epsilon_{j}}\bigg)^{\kappa_{j}p^{M-1}} (\kappa_{j}p^{M-1})^{l^{(P_{2,3})}_{j}-\tilde{l}_{j}} \Big\} \mathcal{S}_{(\kappa_{i})_{i \in P_{2}\setminus \{1\}},p^{M-1}}\bigg(\big( l^{(P_{2,3})}_{i}+\sum_{\substack{j \in P_{3} \\ j_{(P_{2})}=i}}\tilde{l}_{j} \big)_{i\in P_{2}};	(\epsilon_i)_{i\in P_2}\bigg).
\end{multline}
\end{Lemma}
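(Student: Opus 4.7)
The plan is to successively eliminate the variables $u_i$ indexed by $P_1$ and then by $P_3$, using the defining relations of $U_{r,p^M,J}$, until only the variables $u_i$ with $i\in P_2$ remain; the resulting summation range will precisely match the defining conditions of $\mathcal{S}_{(\kappa_i)_{i\in P_2\setminus\{1\}},p^{M-1}}$. Throughout, I read $l_i^{(P_{2,3})}$ and $\epsilon_i^{(P_{2,3})}$ (for $i\in P_2\cup P_3$) as the data obtained by collapsing each maximal chain of consecutive $P_1$-indices immediately following $i$ into $i$: the $l$'s sum up, and $\epsilon_i^{(P_{2,3})}$ is the $\epsilon$ of the next element of $P_2\cup P_3$ after $i$ (taken to be $1$ if $i$ is the last index).

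For the first step, each $i\in P_1$ contributes the relation $u_i=u_{i-1}$. Iterating, every chain of $P_1$-indices attached after an index $i_0\in P_2\cup P_3$ yields $u_{i_0}=u_{i_0+1}=\cdots=u_{i_0+k}$; the monomial factors merge as $u_{i_0}^{l_{i_0}^{(P_{2,3})}}$, and the telescoping product $(\epsilon_{i_0+1}/\epsilon_{i_0})^{u_{i_0}}(\epsilon_{i_0+2}/\epsilon_{i_0+1})^{u_{i_0+1}}\cdots(\epsilon_{i_0+k+1}/\epsilon_{i_0+k})^{u_{i_0+k}}$ reduces to $(\epsilon_{i_0}^{(P_{2,3})}/\epsilon_{i_0})^{u_{i_0}}$. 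For the second step, each $j\in P_3$ satisfies $u_j=u_{j-1}+p^{M-1}$; iterating this back to the previous $P_2$-index $i=j_{(P_2)}$ gives $u_j=u_i+\kappa_jp^{M-1}$, where $\kappa_j$ counts the elements of $P_3$ in $(i,j]$. Splitting $(\epsilon_j^{(P_{2,3})}/\epsilon_j)^{u_j}=(\epsilon_j^{(P_{2,3})}/\epsilon_j)^{\kappa_jp^{M-1}}\cdot(\epsilon_j^{(P_{2,3})}/\epsilon_j)^{u_i}$ peels off the prefactor appearing on the right-hand side, while the binomial expansion
\[
u_j^{l_j^{(P_{2,3})}}=(u_i+\kappa_jp^{M-1})^{l_j^{(P_{2,3})}}=\sum_{\tilde{l}_j=0}^{l_j^{(P_{2,3})}}\binom{l_j^{(P_{2,3})}}{\tilde{l}_j}u_i^{\tilde{l}_j}(\kappa_jp^{M-1})^{l_j^{(P_{2,3})}-\tilde{l}_j}
\]
produces the summation index $\tilde{l}_j$ together with its binomial coefficient and the constant factor $(\kappa_jp^{M-1})^{l_j^{(P_{2,3})}-\tilde{l}_j}$.

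Collecting the remaining $u_i$-dependent terms for $i\in P_2$, the exponent of $u_i$ becomes $l_i^{(P_{2,3})}+\sum_{j\in P_3,\,j_{(P_2)}=i}\tilde{l}_j$, and the accumulated $\epsilon$-weight telescopes, across $P_1$, $P_2$ and $P_3$, to the base $\epsilon_{i^\ast}/\epsilon_i$ where $i^\ast$ is the next $P_2$-index after $i$ (and $\epsilon_{i^\ast}:=1$ if $i$ is the last $P_2$-index), matching the $\epsilon$-weight of $u_i$ in $\mathcal{S}(\,\cdot\,;(\epsilon_i)_{i\in P_2})$. The inherited constraints on $(u_i)_{i\in P_2}$, namely $u_{i'}+\kappa_ip^{M-1}<u_i<u_{i'}+(\kappa_i+1)p^{M-1}$ for consecutive $P_2$-indices $i'<i$ with $\kappa_i$ counting the $P_3$-indices in $(i',i)$, are then exactly the summation conditions of $\mathcal{S}_{(\kappa_i)_{i\in P_2\setminus\{1\}},p^{M-1}}$. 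The main obstacle is the bookkeeping for the joint telescoping of $\epsilon$-ratios across $P_1$, $P_2$, and $P_3$ simultaneously; once this is checked, the identity follows from a routine reshuffling of sums and the binomial theorem.
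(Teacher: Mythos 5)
Your proposal is correct and takes essentially the same route as the paper's own proof: the paper likewise collapses the $P_{1}$-chains via the equalities $u_{i-1}=u_{i}$ (merging exponents into $l_{i}^{(P_{2,3})}$ and telescoping the $\epsilon$-ratios), substitutes $u_{j}=u_{j_{(P_{2})}}+\kappa_{j}p^{M-1}$ for $j\in P_{3}$ with the binomial expansion producing the $\tilde{l}_{j}$-sums and prefactors, and then re-telescopes the remaining $\epsilon$-weights onto the $P_{2}$-variables before summing over the residual constraints, which are exactly those of $\mathcal{S}_{(\kappa_{i})_{i\in P_{2}\setminus\{1\}},p^{M-1}}$. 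There is no substantive difference in method or bookkeeping.
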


Here, 
for $j \in P_{2}\cup P_{3}$, we put 
$\left\{\begin{array}{ll} l^{(P_{2,3})}_{j} = \sum_{k=j}^{\min((P_{2}\cup P_{3})\cap[j+1,r]) -1} l_{k} \\ \epsilon_{j}^{(P_{2,3})} = \epsilon_{\min((P_{2}\cup P_{3}) \cap [j+1,r])}\end{array}\right.$. \\
For $j \in P_{3}$, we put
$\left\{\begin{array}{ll} j_{(P_{2})} = \max[1,j-1]\cap P_{2}
\\ \kappa_{j} = \sharp\big(P_{3} \cap \big[j_{(P_{2})},j\big]\big)\end{array}\right.$.
And, for $i \in P_{2}$, we put
$\left\{\begin{array}{ll} i^{(P_{3})} = \min[i+1,r]\cap P_{3}
\\ \kappa_{i} = \sharp\big(P_{3} \cap \big[i,i^{(P_{3})}\big]\big)\end{array}\right.$.

\begin{proof} Put $\epsilon_{r+1}=1$.
	We have, for any $(u_{i})_{r} \in U_{r,p^{M},J}$,
	\begin{equation*} \begin{array}{cl} \displaystyle \prod_{i\in [1,r]}& \big(\frac{ \epsilon_{i+1}}{\epsilon_{i}}\big)^{u_{i}} u_{i}^{l_{i}}  = \displaystyle\prod_{i\in P_{2}\cup P_{3}} \bigg(\frac{ \epsilon_{i}^{(P_{2,3})}}{\epsilon_{i}}\bigg)^{u_{i}} u_{i}^{l^{(P_{2,3})}_{i}}
	\\ & = \displaystyle\prod_{i\in P_{2}} \big(\frac{ \epsilon_{i}^{(P_{2,3})}}{\epsilon_{i}}\big)^{u_{i}} u_{i}^{l^{(P_{2,3})}_{i}} \prod_{j \in P_{3}} \bigg(\frac{ \epsilon_{j}^{(P_{2,3})}}{\epsilon_{j}}\bigg)^{u_{j_{(P_{2})}}+\kappa_{j}p^{M-1}} (u_{j_{(P_{2})}}+ \kappa_{j}p^{M-1})^{l^{(P_{2,3})}_{j}} 
	\\ & = \displaystyle 
	\prod_{i\in P_{2}} \bigg(\frac{ \epsilon_{i}^{(P_{2,3})}}{\epsilon_{i}}\bigg)^{u_{i}} u_{i}^{l^{(P_{2,3})}_{i}} \prod_{j \in P_{3}} \bigg(\frac{ \epsilon_{j}^{(P_{2,3})}}{\epsilon_{j}}\bigg)^{u_{j_{(P_{2})}}+\kappa_{j}p^{M-1}} \bigg\{
	\sum_{\tilde{l}_{j}=0}^{l_{j}^{(P_{2,3})}}
	{l_{j}^{(P_{2,3})} \choose \tilde{l}_{j}} u_{j_{(P_{2})}}^{\tilde{l}_{j}}(\kappa_{j}p^{M-1})^{l_{j}^{(P_{2,3})}-\tilde{l}_{j}} 
	\bigg\} .
	\end{array} 
	\end{equation*}
For $i \in P_{2}$, let $i^{(P_{2})}= \min(P_{2}\cap [i+1,r])$ and $\epsilon_{i}^{(P_{2})}=\epsilon_{i^{(P_{2})}}$. We have
	$$ \prod_{i\in P_{2}} \bigg(\frac{ \epsilon_{i}^{(P_{2,3})}}{\epsilon_{i}}\bigg)^{u_{i}} \prod_{j \in P_{3}} \bigg(\frac{ \epsilon_{j}^{(P_{2,3})}}{\epsilon_{j}}\bigg)^{u_{j_{(P_{2})}}} = 	
	\prod_{i\in P_{2}} \bigg\{\frac{ \epsilon_{i}^{(P_{2,3})}}{\epsilon_{i}} \bigg(\prod_{j \in P_{3},j_{(P_{2})}=i}  \frac{ \epsilon_{j}^{(P_{2,3})}}{\epsilon_{j}}\bigg)\bigg\}^{u_{i}} = \prod_{i \in P_{2}} \bigg( \frac{\epsilon_{i}^{(P_{2})}}{\epsilon_{i}} \bigg)^{u_{i}} .$$
We obtain the result by summing over all $(u_{i})_{r} \in U_{r,p^{M},J}$.
\end{proof}

\section{End of the proof of theorem \ref{the theorem}}\label{proof of the theorem}
We finish the proof of Theorem \ref{the theorem}.

\begin{proof} Let $(n_{i})_{r} \in \mathbb{N}^{r}$. By the definition of $L_{p,r}$ and by the definition of $D_{r,p^{M}}$ (\S2.1), we have 
$$ L_{p,r}((n_{i})_{r};(\omega^{-n_{i}})_{r};(1)_{r};c) = \lim_{M\rightarrow \infty} \sum_{(x_{i})_{r} \in D_{r,p^{M}}}
\prod_{i=1}^{r}\Big\{\frac{1}{(x_{1}+\cdots+x_{i})^{n_{i}}}
\sum_{\substack{\xi_{i} \in \mu_{c} \setminus \{1\}}}\frac{\xi_{i}^{x_{i}}}{1-\xi_{i}^{p^{M}}} \Big\}. $$
Put $M \in \mathbb{N}$ and $(x_{i})_{r} \in D_{r,p^{M}}$.
Let $((u_{i})_{r};(t_{i})_{r})$ be its image by the map $\mathrm{div}$. For all $i \in [1,r]$, we write 
$(x_{1}+\cdots+x_{i})^{-n_{i}}=(pu_{i}+t_{i})^{-n_{i}} = \sum\limits_{l_{i}\in \mathbb{N}_{0}} {-n_{i} \choose l_{i}} t_{i}^{-n_{i}} \big(\frac{p u_{i}}{t_{i}} \big)^{l_{i}}$.
Then, by Lemma \ref{transformation domain of summation},
\begin{multline} \label{eq:end1}
L_{p,r}((n_{i})_{r};(\omega^{-n_{i}})_{r};(1)_{r};c) = 
\\
 \lim_{M\rightarrow\infty} \sum_{\substack{{\bf l}=(l_{i})_{r} \in \mathbb{N}_{0}^{r}
\\ (\xi_{i})_{r} \in (\mu_{c} \setminus \{1\})^{r}} }  \prod_{i=1}^{r} \frac{  {-n_{i} \choose l_{i}}}{1 - \xi_{i}^{p^{M}}} \sum_{J \in E_{r}}\sum_{(u_{i})_{r} \in U_{r,p^{M},J}}
\prod_{i=1}^{r}\big( p u_{i}\big)^{l_{i}} \xi_{i}^{pu_{i}-pu_{i-1}} \sum_{(t_{i})_{r} \in T_{r,J}} 
\prod_{i=1}^{r}\frac{\xi_{i}^{t_{i}-t_{i-1}}}{t_{i}^{n_{i}+l_{i}}}. 
\end{multline}
\newline Let $J=(P_{1},P_{2},P_{3}) \in E_{r}$.
\begin{itemize}
\item Let $\Delta(T_{r,J})$ be the set of the quasi-simplices
appearing in the decomposition of $T_{r,J}$ in the equation (\ref{eq:Proposition 5 explicit}), so we have $T_{r,J} = \amalg_{\delta\in
	\Delta(T_{r,J})} \delta$.
For each quasi-simplex $\delta\in \Delta(T_{r,J})$ with the presentation
\eqref{presentation of quasi-simplex},
the index ${\bf l}=(l_i)_r\in \mathbb N_0^r$,
and the pair ${\bf w}:=((n_i)_r;(\xi_i)_r)\in \mathbb{N}^{r}\times \mu_c^r$,
we set ${\bf w}({\bf l})_\delta:=({\bf n}({\bf l})_\delta;\xi({\bf l})_\delta)$
with
${\bf n}({\bf l})_\delta=(\sum\limits_{j=1}^{l_1} (n_{a_{1,j}}+l_{a_{1,j}}),
\dots,\sum\limits_{j=1}^{l_k} (n_{a_{k,j}}+l_{a_{k,j}}))$
and
$\xi({\bf l})_\delta=(\prod\limits_{i=1}^k\prod\limits_{j=1}^{l_i}\frac{\xi_{a_{i,j}}}{\xi_{a_{i,j}+1}},\prod\limits_{i=2}^k\prod\limits_{j=1}^{l_i}\frac{\xi_{a_{i,j}}}{\xi_{a_{i,j}+1}},\dots,
\prod\limits_{j=1}^{l_k}\frac{\xi_{a_{k,j}}}{\xi_{a_{k,j}+1}})$.
Then the equation (\ref{eq:Proposition 5 explicit}) implies
\begin{equation} \label{eq:end lemma T}
\sum_{(t_{i})_{r} \in T_{r,J}} \prod_{i=1}^{r}
\frac{\xi_{i}^{t_{i}-t_{i-1}}}{t_{i}^{n_{i}+l_{i}}}  = \sum_{\delta
	\in \Delta(T_{r,J})} H_{p}({\bf w}({\bf l})_{\delta}).
\end{equation}

\item By applying Lemma \ref{mhs on U - 2} then Lemma \ref{mhs on U - 1}, we have, for all  $(l_{i})_{r} \in \mathbb{N}_{0}^{r}$ and $(\xi_{i})_{r} \in (\mu_{c} \setminus \{1\})^{r}$,
\begin{multline} \label{eq:end lemma U}
\sum_{(u_{i})_{r} \in U_{r,p^{M},J}} \prod_{i=1}^{r}
u_{i}^{l_{i}} \xi_{i}^{pu_{i}-pu_{i-1}} 
\\ =
\sum_{\substack{0\leqslant \tilde{l}_{j}\leqslant l^{(P_{2,3})}_{j} \\(j \in P_{3})}} \Big\{\prod_{j\in P_{3}} (\kappa_{j}p^{M-1})^{l^{(P_{2,3})}_{j}-\tilde{l}_{j}} \bigg(\frac{(\xi_{j}^{(P_{2,3})})}{\xi_{j}}\bigg)^{(-p\kappa_{j})p^{M-1}} {l^{(P_{2,3})}_{j} \choose \tilde{l}_{j}} \Big\}
\\ .\text{ }\mathcal{S}_{((\kappa_{i})_{i \in P_{2}},p^{M-1})}\big(\big( l^{(P_{2,3})}_{i}+\sum_{\substack{j\in P_{3}\\ j_{(P_{2})}=i}}\tilde{l}_{j} \big)_{i\in P_{2}};	
(\xi^{-p}_i)_{i\in P_2}\big)
\\ =
\sum_{\substack{0\leqslant \tilde{l}_{j}\leqslant l^{(P_{2,3})}_{j} \\(j \in P_{3})}} \Big\{\prod_{j\in P_{3}} (\kappa_{j}p^{M-1})^{l^{(P_{2,3})}_{j}-\tilde{l}_{j}} \bigg(\frac{(\xi_{j}^{(P_{2,3})})}{\xi_{j}}\bigg)^{(-p\kappa_{j})p^{M-1}} {l^{(P_{2,3})}_{j} \choose \tilde{l}_{j}} \Big\}
\\
 . \sum_{\substack{0 \leqslant l \leqslant l_{1}+\cdots+l_{r}+r
		\\ \xi \in \mu_{c}}}\mathcal{B}_{l,\xi}^{(( l^{(P_{2,3})}_{i}+\sum_{\substack{j\in P_{3}\\ j_{(P_{2})}=i}}\tilde{l}_{j})_{i\in P_{2}},(\xi^{-p}_i)_{i\in P_2},(\kappa_{i})_{i \in P_{2}\setminus \{1\}})} (p^{M-1})^{l} \xi^{p^{M-1}}.
\end{multline}
\end{itemize}
\indent Substituting the equations \eqref{eq:end lemma T} and \eqref{eq:end lemma U} in the equation \eqref{eq:end1}, and multiplying by $p^{n_{1}+\cdots+n_{r}}$, we obtain

\begin{multline} \label{eq:end 2}
p^{n_{1}+\cdots+n_{r}}L_{p,r}((n_{i})_{r};(\omega^{-n_{i}})_{r};(1)_{r};c) = 
\\
\lim_{M\rightarrow\infty} \sum_{\substack{{\bf l}=(l_{i})_{r} \in \mathbb{N}_{0}^{r}
		\\ (\xi_{i})_{r} \in (\mu_{c} \setminus \{1\})^{r} 
		\\ J=(P_{1},P_{2},P_{3}) \in E_{r}}}  \prod_{i=1}^{r} \frac{  {-n_{i} \choose l_{i}}}{1 - \xi_{i}^{p^{M}}} 
\sum_{\substack{0\leqslant \tilde{l}_{j}\leqslant l^{(P_{2,3})}_{j} \\(j \in P_{3})}} \Big\{\prod_{j\in P_{3}} (\kappa_{j}p^{M-1})^{l^{(P_{2,3})}_{j}-\tilde{l}_{j}} \bigg(\frac{(\xi_{j}^{(P_{2,3})})}{\xi_{j}}\bigg)^{(-p\kappa_{j})p^{M-1}} {l^{(P_{2,3})}_{j} \choose \tilde{l}_{j}} \Big\} 
\\
\cdot\sum_{\substack{0 \leqslant l \leqslant  l_{1}+\cdots+l_{r}+r
		\\ \xi \in \mu_{c}}}\mathcal{B}_{l,\xi}^{(( l^{(P_{2,3})}_{i}+\sum_{\substack{j\in P_{3}\\ j_{(P_{2})}=i}}\tilde{l}_{j})_{i\in P_{2}},(\xi^{-p}_i)_{i\in P_2},(\kappa_{i})_{i \in P_{2}\setminus \{1\}})} (p^{M-1})^{l} \xi^{p^{M-1}}
\sum_{\delta \in \Delta(T_{r,J})} p^{\weight({\bf w}({\bf l})_{\delta})} H_{p}({\bf w}({\bf l})_{\delta}).
\end{multline}
\indent For any $b \in \mathbb{N}$,$\epsilon\in \mu_{c}$, $ (\xi_{i})_{r} \in (\mu_{c} \setminus \{1\})^{r}$  we define $A_{b,\epsilon,(\xi_{i})_{r}}^{(n_{i})_{r}}$ below, which does not depend on $M$ :
\begin{align} \label{eq:reordered 1}
A_{b,\epsilon,(\xi_{i})_{r}}^{(n_{i})_{r}}
&=
 \sum_{J=(P_{1},P_{2},P_{3}) \in E_{r}}
	\sum_{\substack{{\bf l}=(l_{i})_{r} \in \mathbb{N}_{0}^{r}
		\\ 0\leqslant \tilde{l}_{j} \leqslant l_{j}^{(P_{2,3})} (j \in P_{3}) 
		\\ l + \sum_{j \in P_{3}} l_{j}^{(P_{2,3})} - \tilde{l}_{j} = b}}
    \sum_{\substack{ \xi \in \mu_{c}
		\\ \prod_{j \in P_{3}}(\frac{\xi_{j}^{(P_{2,3})}}{\xi_{j}})^{-\kappa_{i}p}
		\xi = \epsilon}}   
		\prod_{i=1}^{r} {-n_{i} \choose l_{i}} \Big\{
		\prod\limits_{j\in P_{3}} \kappa_{j}^{l^{(P_{2,3})}_{j}-\tilde{l}_{j}}  {l^{(P_{2,3})}_{j} \choose \tilde{l}_{j}} \Big\}
		\\  \notag
\cdot&
\mathcal{B}_{l,\xi}^{(( l^{(P_{2,3})}_{i}+\sum_{\substack{j\in P_{3}\\ j_{(P_{2})}=i}}\tilde{l}_{j})_{i\in P_{2}},(\xi^{-p}_i)_{i\in P_2},(\kappa_{i})_{i \in P_{2}\setminus \{1\}})}
\sum_{\delta \in \Delta(T_{r,J})} p^{\weight({\bf w}({\bf l})_{\delta})} H_{p}({\bf w}({\bf l})_{\delta}).
\end{align}
The convergence of the right-hand side of (\ref{eq:reordered 1}) follows from that it is a sum of series over
$(l_{i})_{r} \in \mathbb{N}_{0}^{r}$ whose general term has valuation $\displaystyle\geqslant n_{1}+\cdots+n_{r}+l_{1}+\cdots+l_{r} - r\{1+ \frac{\log(l_{1}+\cdots+l_{r}+r)}{\log(p)}\}$, which tends to $\infty$ when $\sum\limits_{i=1}^{r} l_{i} \rightarrow \infty$. Indeed, for all $(m_{i})_{r} \in \mathbb{N}^{r}$ and $(\epsilon_{i})_{r} \in \mu_{c}^{r}$, we have clearly
$$ v_{p} \left( p^{m_{1}+\cdots+m_{r}} H_{p}\left( (m_{i})_r,(\epsilon_{i})_r \right) \right) \geqslant m_{1} + \cdots + m_{r} $$
and for all $J \in E_{r}$, $(l_{i})_{r} \in \mathbb{N}_{0}^{r}$ and $(\tilde{l}_{i})_{i\in P_{2}} \in \prod_{i \in P_{2}} [0,l_{i}^{(P_{2,3})}]$, the first inequality below follows from Lemma \ref{mhs on U - 1}, and the second one follows from $\sharp P_{2} \leqslant r$ and Lemma \ref{mhs on U - 2}.
$$
\begin{array}{ll} \displaystyle v_{p}\Biggl(\mathcal{B}^{(( l^{(P_{2,3})}_{i}+\sum_{\substack{j\in P_{3}\\ j_{(P_{2})}=i}}\tilde{l}_{j})_{i\in P_{2}},(\xi^{-p}_i)_{i\in P_2},(\kappa_{i})_{i \in P_{2}\setminus \{1\}})}_{l,\xi}\Biggr) 
& \displaystyle\geqslant - \sharp P_{2} \bigg\{1 + \frac{\log( \sum\limits_{i \in P_{2}} l_{i}^{(P_{2,3})}+ \sum\limits_{j \in P_{3}} \tilde{l}_{j}+\sharp P_{2})}{\log(p)}\bigg\},
\\ & \displaystyle\geqslant - r \bigg\{1 + \frac{\log( \sum\limits_{i=1}^{r}l_{i}+r)}{\log(p)}\bigg\}.
\end{array}
$$
Reordering the equation (\ref{eq:end 2}), we obtain
\begin{equation} \label{eq:reordered 2}
p^{n_{1}+\cdots+n_{r}}L_{p,r}((n_{i})_r;(\omega^{-n_{i}})_r;(1)_r;c) =  \lim_{M\rightarrow \infty}
\sum_{\substack{0 \leqslant b \\ \epsilon \in \mu_{c}
		\\(\xi_{i})_{r} \in (\mu_{c} \setminus \{1\})^{r}} }  
\frac{(p^{M-1})^{b} \epsilon^{p^{M-1}}}{\prod_{i=1}^{r}(1 - \xi_{i}^{p^{M}})} A_{b,\epsilon,(\xi_{i})_{r}}^{(n_{i})_{r}}.
\end{equation}
The convergence of (\ref{eq:reordered 2}) is proved as follows:
$$
\begin{array}{ll}v_{p}( A_{b,\epsilon,(\xi_{i})_{r}}^{(n_{i})_{r}}) & \geqslant n_{1}+\cdots+n_{r} + \inf \bigg\{ l_{1}+\cdots+l_{r}- r\left\{ 1 + \frac{\log(l_{1}+\cdots+l_{r}+r)}{\log(p)}\right\} \text{ }|\text{ } (l_{i})_{r} \in \mathbb{N}_{0}^{r} \bigg\}
\\ & \geqslant n_{1}+\cdots+n_{r} + 
\min \bigg\{ l_{1}+\cdots+l_{r}- r\left\{ 1 + \frac{\log(l_{1}+\cdots+l_{r}+r)}{\log(p)}\right\} \text{ }|\text{ } (l_{i})_{r} \in \mathbb{N}_{0}^{r}, \sum\limits_{i=1}^{r} l_{i} \in [0,3r] \bigg\}
\\ & \geqslant n_{1}+\cdots+n_{r} - r\left\{ 1 + \frac{\log(4r)}{\log(p)}\right\}.
\end{array} $$
Here the first inequality follows from the above discussion ; the second inequality follows from the fact that the function $f: t \in (-r,\infty) \mapsto t - r \{1 + \frac{\log(t+r)}{\log(p)}\}\in \mathbb{R}$ is increasing on $[\frac{r}{\log(p)}-r,\infty)$ which contains the interval $[3r,\infty)$ ; the third inequality follows from the fact that, for $ (l_{i})_{r} \in\mathbb{N}_{0}^{r}$ such that $\sum\limits_{i=1}^{r} l_{i} \in [0,3r]$ we have $\sum\limits_{i=1}^{r} l_{i}\geqslant 0$ and $-\log(\sum\limits_{i=1}^{r} l_{i}+r)\geqslant -\log(4r)$.

We can compute the limit in (\ref{eq:reordered 2}) by restricting $M$ to the case with $p^{M-1}\equiv 1 \mod c$.
For such $M$'s, we have $\xi^{p^{M-1}}=\xi$ for all $\xi \in \mu_{c}$, and, since $p^{M-1}\to 0$ when ${M\to\infty}$, the limit in \eqref{eq:reordered 2}
is equal to 
$$
\sum_{\substack{ \epsilon \in \mu_{c}
		\\(\xi_{i})_{r} \in (\mu_{c} \setminus \{1\})^{r}}} 
\frac{\epsilon}{\prod_{i=1}^{r}(1 - \xi_{i}^{p})} A^{(n_{i})_{r}}_{0,\epsilon,(\xi_{i})_{r}}.
$$
This is true for any $p \in \mathcal{P}_{c}$. Thus, we deduce :

\begin{multline*}
\left(p^{n_{1}+\cdots+n_{r}}L_{p,r}((n_{i})_r;(\omega^{-n_{i}})_r;(1)_r;c)\right)_{p \in \mathcal{P}_{c}} =
\\ 
\sum_{{\bf l}=(l_{i})_{r} \in \mathbb{N}_{0}^{r}}
\sum_{\substack{ \epsilon \in \mu_{c}
		\\(\xi_{i})_{r} \in (\mu_{c} \setminus \{1\})^{r}}} 
\frac{\epsilon}{\prod_{i=1}^{r}(1 - \xi_{i}^{p})}
\sum_{J=(P_{1},P_{2},P_{3}) \in E_{r}}
\sum_{\substack{ \xi \in \mu_{c}
\\ \prod_{j \in P_{3}}(\frac{\xi_{j}^{(P_{2,3})}}{\xi_{j}})^{-\kappa_{i}p}
\xi = \epsilon}}   
\prod_{i=1}^{r} {-n_{i} \choose l_{i}}
\\  
\cdot\mathcal{B}_{0,\xi}^{(( l^{(P_{2,3})}_{i}+\sum_{\substack{j\in P_{3}\\ j_{(P_{2})}=i}}l^{(P_{2,3})}_{j})_{i\in P_{2}},(\xi^{-p}_i)_{i\in P_2},(\kappa_{i})_{i \in P_{2}\setminus \{1\}})}
\sum_{\delta \in \Delta(T_{r,J})}  \frak{H}({\bf w}({\bf l})_{\delta}).
\end{multline*}

By adjusting $p$-th powers,
we reformulate it to be
\begin{multline} \label{eq:final equation} 
\left(p^{n_{1}+\cdots+n_{r}}L_{p,r}((n_{i})_r;(\omega^{-n_{i}})_r;(1)_r;c)\right)_{p \in \mathcal{P}_{c}} =
\\ 
\sum_{{\bf l}=(l_{i})_{r} \in \mathbb{N}_{0}^{r}}
\sum_{\substack{ \epsilon \in \mu_{c}
		\\(\xi_{i})_{r} \in (\mu_{c} \setminus \{1\})^{r}}} 
\frac{\epsilon}{\prod_{i=1}^{r}(1 - \xi_{i})}
\sum_{J=(P_{1},P_{2},P_{3}) \in E_{r}}
\sum_{\substack{ \xi \in \mu_{c}
\\ \prod_{j \in P_{3}}(\frac{\xi_{j}^{(P_{2,3})}}{\xi_{j}})^{-\kappa_{i}}
\xi = \epsilon}}   
\prod_{i=1}^{r} {-n_{i} \choose l_{i}}
\\  
\cdot\mathcal{B}_{0,\xi}^{(( l^{(P_{2,3})}_{i}+\sum_{\substack{j\in P_{3}\\ j_{(P_{2})}=i}}l^{(P_{2,3})}_{j})_{i\in P_{2}},(\xi^{-1}_i)_{i\in P_2},(\kappa_{i})_{i \in P_{2}\setminus \{1\}})}
\sum_{\delta \in \Delta(T_{r,J})}  \frak{H}({\bf w}({\bf l})_{\delta})^{\Frob^{-1}}.
\end{multline}
Here  for $(x_p)_p\in\prod_{p\in \mathcal{P}_{c}} \mathbb{Q}_{p}(\mu_{c})$,
we define  $(x_p)_p^{\Frob^{-1}}$ to be $(\Frob_p^{-1}(x_p))_p$
with the Frobenius automorphism $\Frob_p:\mathbb{Q}_{p}(\mu_{c})\to\mathbb{Q}_{p}(\mu_{c})$ sending $\xi\mapsto\xi^p$ for $\xi\in\mu_c$.

Finally, in the right-hand side of (\ref{eq:final equation}), the term indexed by any ${\bf l}=(l_{i})_{r} \in \mathbb{N}_{0}^{r}$ has valuation 
$\displaystyle\geqslant n_{1}+\cdots+n_{r}+l_{1}+\cdots+l_{r} - r\{1+ \frac{\log(l_{1}+\cdots+l_{r}+r)}{\log(p)}\}$, 
which tends to $\infty$ when $\sum\limits_{i=1}^rl_{i} \rightarrow \infty$, uniformly with respect to $p$.
\end{proof}

\begin{Example} \label{example}
For $r=1$, our equation (\ref{eq:final equation}) is 
$$ \big( p^{n}L_{p}(n,\omega^{-n},1,c) \big)_{p\in\mathcal{P}_{c}} = \sum_{l\geqslant 0}{-n \choose l} \sum_{\xi_{1} \in \mu_{c} \setminus \{1\}} \sum_{\epsilon\in \mu_{c}} \mathcal{B}_{0,\epsilon}^{(l,\xi_{1}^{-1},\emptyset)}\frac{\epsilon}{1-\xi_{1}} \frak{H}(n+l,\xi_{1})^{\Frob^{-1}} .$$
This formula is a variant of the  following formula 
 which is a particular case of \cite{W}, Theorem 5.11:
 $$
\left(p^mL_p(m;\omega^{1-m})\right)_p
=\sum_{s\geqslant m-1}\frac{(-1)^{s+m+1}}{m-1}\binom{s-1}{m-2}
B_{s+1-m}\cdot{\frak H}(s).$$
\end{Example}

\begin{Remark}
A generalization $L_{p,r,\alpha}$ of $L_{p,r}$ with $\alpha \in \mathbb{N}$ can be defined by replacing the condition $p\nmid x_{1}\gamma_{1}+\ldots+x_{i}\gamma_{i}$ by the condition $p^{\alpha}\nmid x_{1}\gamma_{1}+\ldots+x_{i}\gamma_{i}$ ($1\leqslant i \leqslant r$) in the equation (\ref{eq: domain of integration}).
The results of \cite{FKMT} and Theorem \ref{the theorem} can be generalized to $L_{p,r,\alpha}$ by similar proofs, provided $p^{n_{1}+\cdots+n_{r}}H_{p}\left((n_{i})_r;(\epsilon_{i})_r\right)$ is replaced by $p^{\alpha(n_{1}+\cdots+n_{r})}H_{p^{\alpha}}\left((n_{i})_r;(\epsilon_{i})_r\right)$ in Definition \ref{def mhs}, which defines a more general type of cyclotomic multiple harmonic values satisfying the same properties (\cite{J1,J2,J3,J4}).
\end{Remark}

\end{document}